\titleformat*{\section}{\LARGE\bfseries}
\titleformat*{\subsection}{\Large\bfseries}
\titleformat*{\subsubsection}{\large\bfseries}
\newtheoremstyle{case}{}{}{}{}{}{:}{ }{}
\theoremstyle{case}
\newcommand{\be}{\begin{equation}}
\newcommand{\ee}{\end{equation}}
\newcommand{\ben}{\begin{eqnarray*}}
\newcommand{\een}{\end{eqnarray*}}
\newtheorem{examp}{\sc example}
\newtheorem{remk}{\sc remark}
\newtheorem{corol}{\sc corollary}
\newtheorem{lemma}{\sc lemma}
\newtheorem{theorem}{\sc theorem}
\newtheorem{defn}{\sc definition}
\newcommand{\bt}{\begin{theorem}}
\newcommand{\et}{\end{theorem}}
\newcommand{\bl}{\begin{lemma}}
\newcommand{\el}{\end{lemma}}
\newcommand{\bed}{\begin{defn}}
\newcommand{\eed}{\end{defn}}
\newcommand{\brem}{\begin{remk}}
\newcommand{\erem}{\end{remk}}
\newcommand{\bex}{\begin{examp}}
\newcommand{\eex}{\end{examp}}
\newcommand{\bcl}{\begin{corol}}
\newcommand{\ecl}{\end{corol}}
\newcommand{\NI}{\noindent}
\theoremstyle{definition}
\theoremstyle{remark}
\numberwithin{equation}{section}
\numberwithin{theorem}{section}
\numberwithin{lemma}{section}
\begin{document}

\title{\large\bf\sc On sparse solution of tensor complementarity problem}

\author{R. Deb$^{a,1}$ and A. K. Das$^{b,2}$\\
\emph{\small $^{a}$Jadavpur University, Kolkata , 700 032, India.}\\	
\emph{\small $^{b}$Indian Statistical Institute, 203 B. T.
	Road, Kolkata, 700 108, India.}\\
\emph{\small $^{1}$Email: rony.knc.ju@gmail.com}\\
\emph{\small $^{2}$Email: akdas@isical.ac.in}\\
}

\date{}

\maketitle

\begin{abstract}
\NI In this article we consider the sparse solutions of the tensor complementarity problem (TCP) which are the solutions of the smallest cardinality. We establish a connection between the least element of the feasible solution set of TCP and sparse solution for $Z$-tensor. We propose a $p$ norm regularized minimization model when $p\in (0,1)$ and show that it can approximate sparse solution applying the regularization of parameter.\\

\noindent{\bf Keywords:} Tensor complementarity problem, sparse solution, $l_p$ regularized minimization, $Z$-tensor.

\noindent{\bf AMS subject classifications:} 15A69, 90C33. 
\end{abstract}
\footnotetext[1]{Corresponding author}

\section{Introduction}
Huang and Qi \cite{huang2017formulating} reformulated the multilinear game as a tensor complementarity problem and showed that finding a Nash equilibrium point of the multilinear game is equivalent to finding a solution of the resulted tensor complementarity problem which built a bridge between these two classes of problems. In order to reduce the computational time for large-scale noncooperative multilinear games we get the motivation to study the srarse solution of tensor complementarity problem. The tensor complementarity problem denoted by TCP$(q,\mathcal{A})$ was introduced by Song and Qi \cite{song2017properties} which is a class of nonlinear complementarity problem with the function in terms of a tensor. Recently, a few special structured tensors have been investigated in the literature. For details see \cite{song2017properties}, \cite{song2015properties}.

\NI Let $\mathcal{A}$ be a tensor of order $m$ and dimension $n.$ i.e., $\mathcal{A}\in T_{m,n}.$ Given a vector $p\in \mathbb{R}^n,$ the tensor complementarity problem, denoted by TCP$(q,\mathcal{A})$ is to find $u \in \mathbb{R}^n$ such that, \begin{equation}\label{ Tensor Complementarity problem}
		u\geq 0, \;\;\;\; \mathcal{A}u^{m-1} + q\geq 0, \;\;\;\; \mbox{and}\;\;\;\; u^{T}(\mathcal{A}u^{m-1}+q)=0.
\end{equation}
When the order of the tensor $m=2$ then the problem reduces to a linear complementarity problem. Let $A$ be an $n\times n$ real matrix. Given a vector $q\in \mathbb{R}^n,$ the linear complementarity problem, denoted by LCP$(q,A)$ is finding $u \in \mathbb{R}^n$ such that,
\begin{equation}\label{linear comp problem}
		u\geq 0, \;\;\;\; q + A u \geq 0, \;\;\;\; \mbox{and}\;\;\;\; u^{T}(q+Au)=0.
	\end{equation}

\NI The idea of complementarity generates a large number of optimization problems. The problems which can be constituted as linear complementarity problem includes linear programming, linear fractional programming, convex quadratic programming and the bimatrix game problem. It is well considered in the literature on mathematical programming and occurs in a number of applications in operations research, control theory, mathematical economics, geometry and engineering. For recent works on this problem and applications see \cite{mohan2001more}, \cite{mohan2001classes}, \cite{neogy2006some}, \cite{neogy2005almost}, \cite{mohan2004note}, \cite{das2018invex}, \cite{dutta2023some}, \cite{jana2021iterative}, \cite{jana2018semimonotone} and references therein.
The algorithm developed by Lemke and Howson to work out an equilibrium pair of strategies to a bimatrix game, later modified by Lemke to find an LCP(q, M) provided remarkably to the development of the linear complementarity theory and established further research on the LCP. However, this algorithm does not solve every instance of the linear complementarity problem. In some instances, the problem may terminate inconclusively without either computing a solution to it or showing that no solution exists. For details see \cite{neogy2008mixture} and \cite{neogy2016optimization}.

\NI The concept of PPT appeared in the literature for more than four decades under different names and it is originally motivated by the well-known linear complementarity problem, and applied in many other settings. PPT is basically a transformation of the matrix of a linear system for exchanging unknowns with the corresponding entries of the right hand side of the system. For details see \cite{das2017finiteness}, \cite{mondal2016discounted}, \cite{neogy2012generalized}, \cite{das2016properties} and \cite{neogy2005principal}.

\NI Several matrix classes and their subclasses have been studied extensively because of their predominance in scientific computing, complexity theory, and the theoretical foundations of linear complementarity problems. For recent work on this problem and applications see \cite{jana2019hidden}, \cite{das2016generalized}, \cite{dutta2022on} and \cite{neogy2011singular}. 

\NI A large subclass of GPSBD matrices is identified as row sufficient matrices. This has practical relevance to the study of quadratic programming. In addition, the applicability of Lemke’s algorithm extends the class of LCP problems solvable by Lemke’s algorithm. For details see \cite{neogy2009modeling}, \cite{das2018some} and \cite{jana2018processability}. 
The class of N and N0-matrices arises in the theory of global univalence of functions, multivariate analysis and in complementarity theory. See \cite{mondal2016discounted}, \cite{jana2021more} and \cite{jana2018processability}.

\NI Another major areas of research in this field is to identify those classes of zero-sum stochastic games for which there is a possibility of obtaining a finite step algorithm to compute a solution. The class of structured stochastic game contains single control game, switching control game, games with state independent transitions and separable rewards and the games with additive reward and transitions. The problem of computing the value vector and optimal stationary strategies for structured stochastic games is formulated as a linear complementary problem for discounted and undiscounded zero-sum games. This provides an alternative proof of the orderfield property for these two classes of games. For details, see \cite{neogy2013weak}, \cite{neogy2008mathematical} and \cite{neogy2005linear} and references cited therein. 

\NI Sparse solution in LCP was studied by Chen and Xiang \cite{chen2016sparse} and Shang et al \cite{shang2014minimal}. The structure of tensors and polynomial properties defined by the tensors involved in the corresponding problems play crucial roles in the TCP. For the theories related to the existence of solutions and the compactness of the solution set, see \cite{che2016positive}, \cite{song2016properties}, \cite{ding2018p}, \cite{wang2016exceptionally}, \cite{gowda2015z} and \cite{wang2018solution}.
The class of strong $P$ tensor and strong strictly semipositive tensor are important due to global uniqueness of TCP. For the global uniqueness of the solution of the tensor complementarity problem see \cite{liu2018tensor} and \cite{bai2016global}. Finding the sparsest solutions to a tensor complementarity problem was considerd by Luo et al \cite{luo2017sparsest} for the class of $Z$-tensors.

The paper is organised as follows. Section 2 presents some basic notations and results. In section 3, We establish a connection between the least element of FEA$(q,\mathcal{A})$ and sparse solution of TCP$(q,\mathcal{A})$ for $\mathcal{A}$ being a $Z$ tensor. We introduce an unconstrained $p$ norm regularized minimization problem. We show that this minimization problem approximates the sparse solutions of the TCP. We give a lower bound on nonzero-entry of local minimizers and investigate the regularisation of parameter selection in our proposed regularised model.

\section{Preliminaries}

We introduce some basic notations used in this paper. We consider tensor, matrices and vectors with real entries. For any positive integer $n,$  $[n]$ denotes set $\{ 1, 2,...,n \}$. Let $\mathbb{R}^n$ denote the $n$-dimensional Euclidean space and $\mathbb{R}^n_+ =\{ u\in \mathbb{R}^n : u\geq 0 \}.$ Any vector $u\in \mathbb{R}^n$ is a column vector unless specified otherwise. The norm of a vector $u$ is defined as $\|u\| = \sqrt{|u_1^2| + \cdots |u_n^2|}.$ A $m$th order $n$ dimensional real tensor $\mathcal{A}= (a_{i_1 i_2 ... i_m}) $ is a multidimensional array of entries $a_{i_1 i_2 ... i_m} \in \mathbb{R}$ where $i_j \in [n]$ with $j\in [m]$. The set of all $m$th order $n$ dimensional real tensors are denoted by $T_{m,n}.$ For a tensor $\mathcal{A}\in T_{m,n}$ and $u\in \mathbb{R}^n,\; \mathcal{A}u^{m-1}\in \mathbb{R}^n $ is a vector defined by
	\[ (\mathcal{A} u^{m-1})_i = \sum_{i_2, ...,i_m =1}^{n} a_{i i_2 ...i_m} u_{i_2} \cdots u_{i_m} , \;\forall \; i \in [n], \]
	and $\mathcal{A}u^m\in \mathbb{R} $ is a scalar defined by
 \begin{equation*}
     u^T \mathcal{A}u^{m-1} = \mathcal{A}u^m = \sum_{i_1,...,i_m =1}^{n} a_{i_1  ...i_m} u_{i_1}  \cdots u_{i_m}.
 \end{equation*}
	
\NI Shao \cite{shao2013general} introduced product of tensors. Let $\mathcal{A}$ with order $q \geq 2$ and $\mathcal{B}$ with order $k \geq 1$ be two $n$-dimensional tensors. The product of $\mathcal{A}$ and $\mathcal{B}$ is a tensor $\mathcal{C}$ of order $(q-1)(k-1) + 1$ and dimension $n$ with entries $c_{i \alpha_1 \cdots \alpha_{m-1} } =\sum_{i_2, \cdots ,i_m \in [n]} a_{i i_2 \cdots i_m} b_{i_2 \alpha_1} \cdots b_{i_m \alpha_{m-1}},$ where $i \in [n] $, $\alpha_1, \cdots, \alpha_{m-1} \in [n]^{k-1}.$ 


\noindent Given a tensor $\mathcal{A}= (a_{i_1 ... i_m}) \in T_{m,n}$ and a vector $q \in \mathbb{R}^n$, we define the set of feasible solution of TCP$(q,\mathcal{A})$ as FEA$(q,\mathcal{A})= \{u\in \mathbb{R}^n : u\geq 0,\; \mathcal{A}u^{m-1} +p \geq 0\}$ and the solution set of TCP$(q,\mathcal{A})$ as SOL$(q,\mathcal{A})= \{u\in \mathbb{R}^n : u\geq 0,\; \mathcal{A}u^{m-1} +p \geq 0 \mbox{ and } u^{T}(\mathcal{A}u^{m-1}+p)= 0\}.$

\begin{defn}
	\cite{song2015properties} A tensor $\mathcal{A}= (a_{i_1 i_2 ... i_m}) \in T_{m,n} $ is said to be a $P(P_0)$-tensor, if for each $u\in \mathbb{R}^n \backslash \{0\}$, $\exists \; i\in [n]$ such that $u_i \neq 0$ and $u_i (\mathcal{A}u^{m-1})_i > (\geq 0)$.
\end{defn}

\NI For any real $x,$
$sign(u)=\left\{\begin{array}{cc}
    1 & \mbox{if } x>0; \\
    0 & \mbox{if } x=0; \\
    -1 & \mbox{if } x<0.
\end{array}\right.$
For any $p>0$ and $u\in \mathbb{R}^n,$ let
\begin{equation*}
    \|u\|_p = \left(\sum_{j=1}^n |u|_j^p \right)^{\frac{1}{p}} \mbox{ and } \|u\|_0= \lim_{p\to 0^+} \|u\|_p = \sum_{j=1}^n sign(|u_j|)
\end{equation*}
which is equal to the cardinality of $u.$ If $p\geq 1,$ the $p$-norm is denoted as $\|u\|_p$ for $u\in \mathbb{R}^n.$ Though for $0\leq p<1$, $\|u\|_p$ does not satisfy all the properties of norm.

\NI Given a  tensor $\mathcal{A} \in T_{m,n},$ the least $p$-norm TCP is defined as
\begin{eqnarray}\label{norm p equation}
    \min \|u\|_p^p\\ 
    \mbox{s.t. } u\geq 0,\; \;  \mathcal{A}u^{m-1} + q \geq 0, \; \; u^T (\mathcal{A}u^{m-1} + q)=0.\notag
\end{eqnarray}
\\

\NI In general, the TCP may have many solutions. In this article, we are interested to find the sparse solution of TCP which have the minimum number of nonzero components. For a given tensor $\mathcal{A}$ and a vector $u\in \mathbb{R}^n,$ the problem of seeking a sparse solution can be represented as
\begin{eqnarray}\label{sparse equation}
    \min \|u\|_0\\ 
    \mbox{s.t. } u\geq 0,\; \;  \mathcal{A}u^{m-1} + q \geq 0, \; \; u^T (\mathcal{A}u^{m-1} + q)=0.\notag
\end{eqnarray}

\begin{defn}\cite{bai2016global}
A tensor $\mathcal{A}\in T_{m,n}$ is said to be strong $P$-tensor if for any two different $x=(x_i)$ and $y=(y_i)$ in $\mathbb{R}^n$, $\max_{i\in [n]} (x_i - y_i)(\mathcal{A}x^{m-1} - \mathcal{A}y^{m-1} )_i >0$.
\end{defn}

\begin{defn}\cite{luo2017sparsest}
    Let $\mathcal{A}\in T_{m,n}.$ The tensor $\mathcal{A}$ is said to be a $Z$-tensor if all its off-diagonal entries are nonpositive.
\end{defn}

\begin{theorem}\cite{luo2017sparsest}\label{least element theorem}
    Suppose $\mathcal{A}$ is a $Z$-tensor and $q\in \mathbb{R}^n.$ Suppose that the tensor complementarity problem TCP$(q,\mathcal{A})$ is feasible, i.e., FEA$(q,\mathcal{A}) = \{u \in \mathbb{R}^n : u\geq 0 , \; \mathcal{A}u^{m-1} + q \geq 0 \} \neq \phi .$ Then FEA$(q,\mathcal{A})$ has a unique least element $u^*$ which is also a solution to TCP$(q,\mathcal{A}).$
\end{theorem}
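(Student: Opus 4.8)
The plan is to adapt the classical least-element argument for the linear complementarity problem with a $Z$-matrix to the tensor setting. Everything rests on one structural lemma: FEA$(q,\mathcal{A})$ is closed under the componentwise minimum (meet). Concretely, I would first show that if $u,v\in$ FEA$(q,\mathcal{A})$ and $w=u\wedge v$, i.e. $w_k=\min\{u_k,v_k\}$ for all $k\in[n]$, then $w\in$ FEA$(q,\mathcal{A})$. Clearly $w\geq 0$; to check $\mathcal{A}w^{m-1}+q\geq 0$, fix $i\in[n]$ and assume without loss of generality $w_i=u_i$. In the expansion $(\mathcal{A}w^{m-1})_i=\sum_{i_2,\dots,i_m} a_{ii_2\cdots i_m}w_{i_2}\cdots w_{i_m}$ the only diagonal term is $a_{ii\cdots i}w_i^{m-1}=a_{ii\cdots i}u_i^{m-1}$, which is unchanged, whereas every other term has $a_{ii_2\cdots i_m}\leq 0$ since $\mathcal{A}$ is a $Z$-tensor, and $0\leq w_{i_2}\cdots w_{i_m}\leq u_{i_2}\cdots u_{i_m}$ because $0\leq w\leq u$; hence each such term can only increase when $u$ is replaced by $w$, giving $(\mathcal{A}w^{m-1})_i\geq(\mathcal{A}u^{m-1})_i\geq -q_i$. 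I expect this lemma to be the heart of the proof; the rest is soft analysis.

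Next I would produce the least element. Since feasibility is preserved under finite meets, and FEA$(q,\mathcal{A})$ is closed (the map $u\mapsto\mathcal{A}u^{m-1}$ is a polynomial, hence continuous) and bounded below by $0$, set $u^*_i=\inf\{u_i:u\in\,$FEA$(q,\mathcal{A})\}\in[0,\infty)$ for each $i\in[n]$. Choosing, for each $i$, feasible points $u^{(k,i)}$ with $u^{(k,i)}_i\to u^*_i$ and putting $v^{(k)}=u^{(k,1)}\wedge\cdots\wedge u^{(k,n)}\in\,$FEA$(q,\mathcal{A})$, one has $u^*_i\leq v^{(k)}_i\leq u^{(k,i)}_i$, so $v^{(k)}\to u^*$; closedness of FEA$(q,\mathcal{A})$ then yields $u^*\in\,$FEA$(q,\mathcal{A})$. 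By construction $u^*$ is a lower bound of FEA$(q,\mathcal{A})$ lying in it, hence the least element, and least elements are unique.

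Finally I would check $u^*\in\,$SOL$(q,\mathcal{A})$, i.e. $u^{*T}(\mathcal{A}(u^*)^{m-1}+q)=0$. Since $u^*\geq 0$ and $\mathcal{A}(u^*)^{m-1}+q\geq 0$, it suffices to exclude an index $i$ with $u^*_i>0$ and $(\mathcal{A}(u^*)^{m-1}+q)_i>0$. If such $i$ existed, take $\tilde u=u^*-\varepsilon e_i$ with $\varepsilon\in(0,u^*_i)$ small: component $i$ of $\mathcal{A}\tilde u^{m-1}+q$ stays positive for $\varepsilon$ small by continuity, and for $j\neq i$ only the terms $a_{ji_2\cdots i_m}\tilde u_{i_2}\cdots\tilde u_{i_m}$ in which $i$ occurs among $i_2,\dots,i_m$ are affected, each such coefficient being $\leq 0$ (the string $(j,i_2,\dots,i_m)$ is not constant as $j\neq i$), so these terms do not decrease and $(\mathcal{A}\tilde u^{m-1}+q)_j\geq(\mathcal{A}(u^*)^{m-1}+q)_j\geq 0$. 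Thus $\tilde u\in\,$FEA$(q,\mathcal{A})$ with $\tilde u_i<u^*_i$, contradicting minimality of $u^*$. Hence for every $i$ either $u^*_i=0$ or $(\mathcal{A}(u^*)^{m-1}+q)_i=0$, which gives complementarity. The one point requiring care is that the componentwise infimum is actually attained, and that is exactly what meet-closedness together with closedness of FEA$(q,\mathcal{A})$ provides; without the $Z$-tensor hypothesis it can genuinely fail.
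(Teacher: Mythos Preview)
The paper does not supply its own proof of this theorem; it is quoted verbatim from \cite{luo2017sparsest} as a preliminary result and used as a black box in Section~3. So there is nothing in the present paper to compare your argument against.

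That said, your proposal is correct and is precisely the argument given in the cited source. The three steps --- meet-closedness of FEA$(q,\mathcal{A})$ via the $Z$-tensor sign pattern, existence of the componentwise infimum as a feasible least element by closedness plus finite meets, and complementarity by the $u^*-\varepsilon e_i$ perturbation --- are exactly how Luo, Qi and Xiu establish the result. Your handling of the key inequality $(\mathcal{A}w^{m-1})_i\geq(\mathcal{A}u^{m-1})_i$ when $w_i=u_i$ is clean: the diagonal term is frozen because $w_i=u_i$, and every off-diagonal monomial has a nonpositive coefficient multiplied by a product that can only shrink under $u\mapsto w$. The perturbation step for complementarity is likewise sound; the observation that for $j\neq i$ any monomial containing the index $i$ is automatically off-diagonal (since the leading index is $j\neq i$) is exactly the point needed.
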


\section{Main results}
Here we explore the properties of sparse solution of TCP$(q,\mathcal{A}).$
\begin{theorem}
Let the solution set of TCP$(q,\mathcal{A})$ be nonempty. Then $\exists$ a sparse solution of the TCP$(q,\mathcal{A}).$
\end{theorem}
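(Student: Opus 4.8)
The plan is to reduce the claim to an elementary observation: the objective in \eqref{sparse equation}, namely $u \mapsto \|u\|_0$, takes values in the finite set $\{0,1,\dots,n\}$, so its minimum over any nonempty feasible set is automatically attained. No structural property of $\mathcal{A}$ (such as being a $Z$-tensor, or the least-element result of Theorem~\ref{least element theorem}) is needed for this particular statement — it is purely a consequence of the discreteness of the cardinality function together with the nonemptiness hypothesis.

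First I would record that for every $u \in \mathbb{R}^n$ one has $\|u\|_0 = \sum_{j=1}^n \mathrm{sign}(|u_j|) \in \{0,1,\dots,n\}$; in particular the set $K := \{\, \|u\|_0 : u \in \mathrm{SOL}(q,\mathcal{A}) \,\}$ is a subset of $\{0,1,\dots,n\}$. By hypothesis $\mathrm{SOL}(q,\mathcal{A}) \neq \emptyset$, so $K$ is a nonempty set of nonnegative integers and, by the well-ordering principle, has a least element $k$. Choosing any $u^{\ast} \in \mathrm{SOL}(q,\mathcal{A})$ with $\|u^{\ast}\|_0 = k$ yields a feasible point of \eqref{sparse equation} whose cardinality is no larger than that of any other solution; hence $u^{\ast}$ is an optimal solution of \eqref{sparse equation}, i.e.\ a sparse solution of TCP$(q,\mathcal{A})$.

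I do not expect any genuine obstacle. The one point that might superficially look like a gap — attainment of the infimum — is precisely what is taken care of for free by the fact that $\|\cdot\|_0$ is integer-valued and bounded below by $0$; one does not need compactness or closedness of $\mathrm{SOL}(q,\mathcal{A})$, which in general fails. If a more constructive formulation were desired, I would instead phrase it as a finite descent: start from an arbitrary $u^0 \in \mathrm{SOL}(q,\mathcal{A})$, and as long as the current solution is not of minimum cardinality, pass to a solution with strictly fewer nonzero entries; since $\|u^0\|_0 \le n$, this process terminates after at most $n$ steps at a sparse solution.
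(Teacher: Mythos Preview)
Your argument is correct and is essentially identical to the paper's own proof: both observe that the feasible set of \eqref{sparse equation} coincides with $\mathrm{SOL}(q,\mathcal{A})$, and that since $\|\cdot\|_0$ takes only the finitely many values $0,1,\dots,n$, the minimum over a nonempty feasible set is automatically attained. The well-ordering and finite-descent remarks you add are harmless elaborations of the same idea rather than a different route.
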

\begin{proof}
If SOL$(q,\mathcal{A}) \neq \phi,$ then the feasible region of (\ref{sparse equation}) is nonempty. The objective value of the problem (\ref{sparse equation})  has a finite number of choices among $0,\;1,\; ...,\;n.$ So, (\ref{sparse equation}) is always solvable, and any solution $u^*\in \arg \min_{\Bar{u} \in \mbox{SOL}(q,\mathcal{A})} \|\Bar{u}\|_0$ is a sparse solution of the TCP$(q,\mathcal{A}).$
\end{proof}

\begin{remk}
It is obvious that the sparse solution of TCP$(q,\mathcal{A})$ is unique if SOL$(q,\mathcal{A})$ is unique. For instance, if $\mathcal{A}$ is strong $P$-tensor or strong strictly semipositive tensor then the uniqueness of sparse solution is guaranteed. 
\end{remk}

\NI An element $u$ in $S\subseteq \mathbb{R}^n$ is referred to as the least element of $S$ if and only if $u \leq v$ for all $v$ in $S.$ Under the supposition of the $Z$-tensor, it is possible to obtain the presence of such a least element of set of feasible solutions of TCP$(q,\mathcal{A}).$ Theorem \ref{least element theorem} states about the existance and uniqueness of such least element. Here we establish a connection between the least element of FEA$(q,\mathcal{A})$ and the sparse solution of the TCP involving $Z$-tensor.
\begin{theorem}
   Let a $Z$-tensor $\mathcal{A}\in T_{m,n}$ and $q\in \mathbb{R}^n.$ If FEA$(q,\mathcal{A})\neq \phi,$ then the least element $u$ of FEA$(q,\mathcal{A})$ is a sparse solution of TCP$(q,\mathcal{A}).$
\end{theorem}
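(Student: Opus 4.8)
The plan is to lean almost entirely on Theorem~\ref{least element theorem} and then run a short support-containment argument. First I would invoke Theorem~\ref{least element theorem}: since $\mathcal{A}$ is a $Z$-tensor and FEA$(q,\mathcal{A})\neq\phi$, the set FEA$(q,\mathcal{A})$ has a unique least element $u$, and moreover $u\in\,$SOL$(q,\mathcal{A})$. So $u$ is already known to be a solution of TCP$(q,\mathcal{A})$; what remains is to show it has the minimal number of nonzero components among all elements of SOL$(q,\mathcal{A})$.

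Next I would observe the trivial inclusion SOL$(q,\mathcal{A})\subseteq\,$FEA$(q,\mathcal{A})$. Hence for every $v\in\,$SOL$(q,\mathcal{A})$ we have $v\in\,$FEA$(q,\mathcal{A})$, and since $u$ is the least element of FEA$(q,\mathcal{A})$, it follows that $u\leq v$ componentwise. Because both $u\geq 0$ and $v\geq 0$, the inequality $0\leq u_i\leq v_i$ for all $i\in[n]$ shows that $u_i>0$ forces $v_i>0$; that is, the support of $u$ is contained in the support of $v$. Therefore $\|u\|_0=\sum_{i=1}^n sign(|u_i|)\leq\sum_{i=1}^n sign(|v_i|)=\|v\|_0$ for every $v\in\,$SOL$(q,\mathcal{A})$.

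Finally I would conclude that $u\in\arg\min_{v\in\mathrm{SOL}(q,\mathcal{A})}\|v\|_0$, so $u$ solves the combinatorial problem~(\ref{sparse equation}) and is by definition a sparse solution of TCP$(q,\mathcal{A})$. I do not anticipate a genuine obstacle here: the only point requiring a moment's care is the passage from the componentwise order $u\leq v$ to support containment, which uses nonnegativity of both vectors in an essential way, and the fact that Theorem~\ref{least element theorem} already supplies both the existence of the least element and its membership in SOL$(q,\mathcal{A})$, so no separate feasibility/complementarity verification is needed.
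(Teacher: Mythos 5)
Your proposal is correct and uses essentially the same argument as the paper: both hinge on Theorem~\ref{least element theorem} together with the observation that the least element's componentwise inequality $u\leq v$ plus nonnegativity forces the support of $u$ to lie inside the support of any $v\in\mathrm{SOL}(q,\mathcal{A})$. The only difference is presentational --- you argue directly via support containment, whereas the paper runs the same idea as a contradiction (and your direct phrasing in fact sidesteps the paper's small slip of writing $\|\Tilde{u}\|_{0}\leq\|y\|_{0}$ where a strict inequality is needed).
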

\begin{proof}
    From Theorem \ref{least element theorem}, we conclude that the set of feasible solution of TCP$(q,\mathcal{A})$ has a unique least element $y,$ which is also a solution to TCP$(q,\mathcal{A}).$ We assert that this least element $y$ is a sparse solution of TCP$(q,\mathcal{A}).$ If not, let there exists some solution $\Tilde{u} \in$ SOL$(q,\mathcal{A})$ such that $\|\Tilde{u}\|_{0} \leq \|y\|_{0}.$ Then there exists some index $l\in [n]$ such that $y_l\neq 0$ but $\Tilde{u}_l=0.$ This infers that $y_l<0,$ since $y$ is the least element of FEA$(q,\mathcal{A}).$ This contradicts the fact that $y\geq 0.$ Therefore $y$ is a sparse solution of TCP$(q,\mathcal{A}).$
\end{proof}

Here is an example of TCP having infinitely many solution and unique sparse solution.

\begin{examp}
Consider the tensor $\mathcal{A}\in T_{3,2}$ such that $a_{111}=1,\; a_{222}=1.5,\; a_{333}=2,\; a_{131}=-3,\; a_{113}=1,\; a_{133}=-1,\; a_{311}=-2,\; a_{313}=3,\; a_{331}=1.$ Let $q=(-1, 0, 1)^T\in \mathbb{R}^3.$ Then it is easy to find that all vectors $x = (a+ \sqrt{2a^2 +1}, 0, a)^T$ with arbitrary $a \geq 0$ are solutions to the TCP$(q,\mathcal{A}).$ The sparse solution of the TCP$(q,\mathcal{A})$ is $u^* =(1, 0, 0)^T,$ and that is unique.
\end{examp}

\NI Consider the Fischer-Burmeister NCP function defined as 
\begin{equation}\label{FB function}
    \phi_{FB}(u,v)= \sqrt{u^2 +v^2} -(u+v).
\end{equation}
Then $\phi_{FB}(u,v)=0$ if and only if $u\geq 0, \; v\geq 0$ and $uv=0.$
Define a function $\Phi_{FB}: \mathbb{R}^n \to \mathbb{R}^n$ as follows
\begin{equation}\label{FB vector function}
    \Phi_{FB}(u)=\left( \begin{array}{c}
         \phi_{FB}(u_1, (q+ \mathcal{A}u^{m-1})_1) \\
         \vdots\\
         \phi_{FB}(u_n, (q+ \mathcal{A}u^{m-1})_n)
    \end{array} \right).
\end{equation}
Then it is clear from (\ref{ Tensor Complementarity problem}), (\ref{FB function}) and (\ref{FB vector function}) that $u\in$ SOL$(q,\mathcal{A})$ iff $\Phi_{FB}(u)=0.$\\

\NI Now to approximate the sparse solution of TCP$(q,\mathcal{A})$ 
we invoke the $l_p$ regularization and get
\begin{equation}\label{l_p regularized equation}
     \min_{u\in \mathbb{R}^n} f(u)=  \frac{1}{2} \|\Phi_{FB}(u)\|^2 + t \|u\|_p^p,
\end{equation}
where $t \in (0, \infty)$ is a given regularization parameter and $p\in (0,1).$ The unconstrained minimization problem (\ref{l_p regularized equation}) is called $l_p$ regularized minimization problem.

\begin{theorem}
    For any fixed $t>0,$ the solution set of (\ref{l_p regularized equation}) is nonempty and bounded.
\end{theorem}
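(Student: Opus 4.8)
The plan is to show that $f$ is coercive and continuous on $\mathbb{R}^n$, so that standard arguments (Weierstrass-type) yield a nonempty and bounded solution set for the minimization problem \eqref{l_p regularized equation}. First I would observe that $f$ is continuous: the map $u \mapsto \mathcal{A}u^{m-1}$ is a polynomial (hence continuous), $\phi_{FB}$ is continuous, so $\Phi_{FB}$ and $\|\Phi_{FB}(\cdot)\|^2$ are continuous, and $u \mapsto \|u\|_p^p = \sum_{j=1}^n |u_j|^p$ is continuous even for $p \in (0,1)$. Thus $f$ is continuous on all of $\mathbb{R}^n$, and since $f(u) \geq 0$ for all $u$, the infimum $f^* := \inf_{u \in \mathbb{R}^n} f(u)$ is finite and nonnegative.

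Next I would establish coercivity, i.e. $f(u) \to \infty$ as $\|u\| \to \infty$. The key point is the regularization term: for any $u$ with $\|u\| = r$, at least one coordinate satisfies $|u_j| \geq r/\sqrt{n}$, so $\|u\|_p^p \geq (r/\sqrt{n})^p \to \infty$ as $r \to \infty$. Since $\frac{1}{2}\|\Phi_{FB}(u)\|^2 \geq 0$, we get $f(u) \geq t\,(r/\sqrt n)^p \to \infty$, which is coercivity. (Note this is exactly where the parameter $t>0$ is used and why the claim requires $t$ fixed and positive.)

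With coercivity in hand, I would argue as follows. Pick any $u^0 \in \mathbb{R}^n$, say $u^0 = 0$, and set $c = f(u^0)$. By coercivity there is $R > 0$ such that $f(u) > c$ whenever $\|u\| > R$; hence the sublevel set $L = \{u \in \mathbb{R}^n : f(u) \leq c\}$ is contained in the closed ball $\{\|u\| \leq R\}$, so $L$ is bounded, and it is closed by continuity of $f$, hence compact and nonempty (it contains $u^0$). A continuous function attains its minimum on the compact set $L$, say at $u^*$; and since every point outside $L$ has $f$-value strictly larger than $c \geq f(u^*)$, $u^*$ is a global minimizer over $\mathbb{R}^n$. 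Finally, the whole solution set $S = \{u : f(u) = f^*\}$ satisfies $S \subseteq L$ (because $f^* \leq c$), so $S$ is bounded; it is also nonempty since $u^* \in S$.

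The argument is essentially routine; the only point requiring a little care — and the one I would single out as the crux — is the coercivity estimate, specifically the observation that $\|u\|_p^p$ blows up as $\|u\| \to \infty$ despite $p \in (0,1)$ (so $\|\cdot\|_p$ is not a genuine norm), which follows from the elementary bound $\|u\|_p^p \geq \max_j |u_j|^p \geq (\|u\|_\infty)^p \geq (\|u\|/\sqrt n)^p$. Everything else is the standard Weierstrass compactness argument applied to a continuous coercive function.
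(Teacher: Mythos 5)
Your proof is correct, and it follows the same coercivity-based strategy the paper gestures at, but it is more complete in two respects worth noting. First, you actually prove coercivity via the elementary bound $\|u\|_p^p \geq \|u\|_\infty^p \geq (\|u\|/\sqrt{n})^p$, whereas the paper simply asserts that $f$ is coercive (deferring even the assertion's justification to a later theorem). Second, you use coercivity where it is genuinely needed for existence: the paper's nonemptiness step argues only that $f$ is continuous and bounded below and ``hence attains the minimum,'' which is not valid on the unbounded domain $\mathbb{R}^n$ without some additional property (e.g.\ $e^{-x}$ on $\mathbb{R}$ is continuous and bounded below but attains no minimum); your sublevel-set Weierstrass argument supplies exactly the missing ingredient. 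The paper then proves boundedness by contradiction from coercivity, which is equivalent to your observation that the solution set sits inside a bounded sublevel set. In short, same route, but your write-up closes the gaps in the paper's version.
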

\begin{proof}
    Let for any fixed $t>0,$ the solution set of (\ref{l_p regularized equation}) be $S_t.$ Note that $f(u)\geq0, \; \forall \; u\in \mathbb{R}^n.$ So the function is bounded below. Again the function is continuous and hence attains the minimum. Therefore the set $S_t$ is nonempty.
    \NI To prove the boundedness of $S_t$ we use the method of contradiction. Let $S_t$ be not bounded. Then there exists a sequence $\{ u_n \}$ in $S_t$ such that $\|u_n\| \to \infty$ as $n \to \infty.$ Consider the sequence of functional values of $\{ u_n \}$ as $\{ f(u_n) \}.$ Then we have $\|f(u_n)\| \to \infty$ as $n \to \infty,$ since $f$ is a coersive function. This contradicts the fact that $u_n \in S_t.$  
\end{proof}

\begin{theorem}\label{minimal l_p norm theorem}
 Let $u_{t}$ be a solution of (\ref{l_p regularized equation}), and $\{t_k\}$ be any positive sequence that converges to $0.$ If SOL$(q, \mathcal{A})$ is nonempty, then $\{u_{t}\}$ has at least one accumulation point, and any accumulation point $u^*$ of $\{u_{t_k}\}$ is a minimal $l_p$ norm solution of TCP$(q, \mathcal{A}),$ i.e., $u^*\in\mbox{SOL}(q,\mathcal{A}) \mbox{ and } \|u^*\|_p^p \leq \|\Bar{u}\|_p^p \mbox{ for any } \Bar{u}\in \mbox{SOL}(q,\mathcal{A}).$
\end{theorem}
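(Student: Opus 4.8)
The plan is to establish two things: first, that the family $\{u_{t_k}\}$ is bounded (so accumulation points exist), and second, that any accumulation point lies in $\mbox{SOL}(q,\mathcal{A})$ and minimizes $\|\cdot\|_p^p$ over that set. Throughout I would compare $f$ evaluated at $u_{t_k}$ against $f$ evaluated at a fixed reference solution $\bar{u}\in\mbox{SOL}(q,\mathcal{A})$, which is available since $\mbox{SOL}(q,\mathcal{A})\neq\phi$. Note that for such $\bar{u}$ we have $\Phi_{FB}(\bar{u})=0$, hence $f(\bar{u})=t_k\|\bar{u}\|_p^p$.

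\textbf{Step 1 (boundedness).} Since $u_{t_k}$ is a minimizer of $f$ with parameter $t_k$, we have
\[
\tfrac{1}{2}\|\Phi_{FB}(u_{t_k})\|^2 + t_k\|u_{t_k}\|_p^p = f(u_{t_k}) \leq f(\bar{u}) = t_k\|\bar{u}\|_p^p.
\]
Dropping the nonnegative first term gives $\|u_{t_k}\|_p^p \leq \|\bar{u}\|_p^p$ for every $k$, so the sequence $\{u_{t_k}\}$ is bounded in the $l_p$ quasi-norm, hence bounded in $\mathbb{R}^n$. Therefore $\{u_{t_k}\}$ has at least one accumulation point $u^*$; pass to a subsequence (not relabeled) with $u_{t_k}\to u^*$.

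\textbf{Step 2 ($u^*$ is a solution).} From the same inequality, dropping instead the second term, $\tfrac{1}{2}\|\Phi_{FB}(u_{t_k})\|^2 \leq t_k\|\bar{u}\|_p^p \to 0$ as $k\to\infty$, since $t_k\to 0$ and $\|\bar{u}\|_p^p$ is a fixed constant. Hence $\|\Phi_{FB}(u_{t_k})\|\to 0$. Because $\Phi_{FB}$ is continuous (it is built from $\phi_{FB}$ and the polynomial map $u\mapsto \mathcal{A}u^{m-1}$, both continuous), passing to the limit yields $\Phi_{FB}(u^*)=0$, which is equivalent to $u^*\in\mbox{SOL}(q,\mathcal{A})$.

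\textbf{Step 3 (minimal $l_p$ norm).} For the fixed $\bar{u}$ above, Step 1 gave $\|u_{t_k}\|_p^p \leq \|\bar{u}\|_p^p$ for all $k$. The map $u\mapsto\|u\|_p^p=\sum_j|u_j|^p$ is continuous on $\mathbb{R}^n$ (each $|u_j|^p$ is continuous for $p\in(0,1)$), so taking $k\to\infty$ gives $\|u^*\|_p^p \leq \|\bar{u}\|_p^p$. Since $\bar{u}$ was an arbitrary element of $\mbox{SOL}(q,\mathcal{A})$, we conclude $\|u^*\|_p^p \leq \|\bar{u}\|_p^p$ for every $\bar{u}\in\mbox{SOL}(q,\mathcal{A})$, and combined with Step 2 this says $u^*$ is a minimal $l_p$ norm solution of TCP$(q,\mathcal{A})$.

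\textbf{Main obstacle.} The only delicate point is the interplay of quantifiers in Step 3: one must fix a single reference solution $\bar{u}$ to run Steps 1–2, then observe afterward that the resulting bound $\|u^*\|_p^p\leq\|\bar{u}\|_p^p$ holds for that arbitrary choice, so it holds for all of them. A minor technical care is also needed in Step 1 to record that $\|u_{t_k}\|_p^p$ bounded implies $\{u_{t_k}\}$ bounded in the Euclidean norm — immediate in finite dimension since all $l_p$ quasi-norms are equivalent up to the fixed dimension $n$ — and in Step 2 that continuity of $\Phi_{FB}$ is the right tool rather than any deeper property of the tensor.
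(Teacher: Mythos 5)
Your proposal is correct and follows essentially the same route as the paper's proof: compare $f(u_{t_k})$ with $f(\bar{u})$ for a reference solution $\bar{u}$ (using $\Phi_{FB}(\bar{u})=0$), deduce $\|u_{t_k}\|_p^p\leq\|\bar{u}\|_p^p$ for boundedness, let $t_k\to 0$ with continuity of $\Phi_{FB}$ to get $u^*\in\mbox{SOL}(q,\mathcal{A})$, and pass to the limit in the $l_p$ bound for arbitrary $\bar{u}$. No substantive difference from the paper's argument.
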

\begin{proof}
Suppose $\Bar{u}\in$ SOL$(q,\mathcal{A}).$ Notice that $u_{t_k}$ is a solution of (\ref{l_p regularized equation}) with $t=t_k.$ Also we have
\begin{align}\label{first for first theorem}
    \max\{\frac{1}{2}\|\Phi_{FB}(u_{t_k})\|^2, t_k \|u_{t_k}\|_p^p  \} & \leq \frac{1}{2}\|\Phi_{FB}(u_{t_k})|^2 + t_k \|u_{t_k}|_p^p \notag \\
     & \leq \frac{1}{2}\|\Phi_{FB}(\Bar{u})\|^2 + t_k \|\Bar{u}_{t_k}\|_p^p  \notag \\
     & = t_k \|\Bar{u}_{t_k}\|_p^p.
\end{align}
Now by (\ref{first for first theorem}), it can be easily obtained that, for any $t_k,$
\begin{equation}\label{2nd first theorem}
    \|u_{t_k}\|_p^p \leq \|\Bar{u}\|_p^p =\gamma.
\end{equation}
Therefore the sequence $\{u_{t_k}\}$ is bounded and has at least one accumulation point. Suppose an arbitrary accumulation point of $\{u_{t_k}\}$ is $u^*$ and $\{t_{k_j}\}$ is an subsequence of $t_k$ such that $\lim_{k_j \to \infty} u_{t_{k_j}} = u^*.$

\NI From (\ref{first for first theorem}), for any $t_{k_j},$ we have $\frac{1}{2} \|\Phi_{FB}(u_{t_{k_j}}) \|^2 \leq t_{k_j} \|\Bar{u}\|_p^p.$
Taking $k_j\to \infty$ to both sides of the inequality and by the use of continuity of the function $\|\Phi_{FB}(\cdot)\|$ we obtain
\begin{equation*}
    \|\Phi_{FB}(u^*)\| = \lim_{k_j\to \infty} \|\Phi_{FB}(u_{t_{k_j}})\| =0.
\end{equation*}
This implies $u^*\in $SOL$(q,\mathcal{A}).$ Now from (\ref{2nd first theorem}) for an arbitrary $\Bar{u}\in$ SOL$(q,\mathcal{A})$ we have $\|u_{t_{k_j}}\|_p^p \leq \|u\|_p^p.$
Again by taking $k_j\to \infty$ to the above inequality, we obtain
\begin{equation*}
    \|u^*|_p^p \leq \|u\|_p^p, \mbox{ for any } u\in \mbox{ SOL}(q,\mathcal{A}).
\end{equation*}
Therefore, $u^*$ is a minimal $l_p$ of TCP$(q,\mathcal{A}).$
\end{proof}

\begin{corol}
    Let $\mathcal{A}$ be a $Z$ tensor in Theorem \ref{minimal l_p norm theorem}. Then $u^*$ in Theorem \ref{minimal l_p norm theorem} is a sparse solution of TCP$(q,\mathcal{A}).$
\end{corol}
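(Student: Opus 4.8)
The plan is to show that the accumulation point $u^*$ furnished by Theorem \ref{minimal l_p norm theorem} is nothing but the unique least element of FEA$(q,\mathcal{A})$, and then to quote the theorem established earlier which says that this least element is a sparse solution of TCP$(q,\mathcal{A})$. First I would note that, since SOL$(q,\mathcal{A})$ is nonempty by the hypothesis of Theorem \ref{minimal l_p norm theorem} and SOL$(q,\mathcal{A})\subseteq$ FEA$(q,\mathcal{A})$, the feasible set FEA$(q,\mathcal{A})$ is nonempty; as $\mathcal{A}$ is a $Z$-tensor, Theorem \ref{least element theorem} yields a unique least element $y$ of FEA$(q,\mathcal{A})$ which is itself a solution of TCP$(q,\mathcal{A})$, and by the earlier theorem $y$ is in fact a sparse solution.

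Next I would compare $y$ with $u^*$. Because $u^*\in$ SOL$(q,\mathcal{A})\subseteq$ FEA$(q,\mathcal{A})$ and $y$ is the least element, we get $0\le y\le u^*$ componentwise. Since $p\in(0,1)$, the map $s\mapsto s^p$ is nondecreasing on $[0,\infty)$, so $y_i^p\le (u^*_i)^p$ for every $i\in[n]$ and hence $\|y\|_p^p\le\|u^*\|_p^p$. On the other hand $u^*$ is a minimal $l_p$-norm solution and $y\in$ SOL$(q,\mathcal{A})$, so $\|u^*\|_p^p\le\|y\|_p^p$. Combining the two gives $\|y\|_p^p=\|u^*\|_p^p$; together with the termwise inequalities $y_i^p\le(u^*_i)^p$ this forces $y_i^p=(u^*_i)^p$ for all $i$, so $y=u^*$ as both vectors are nonnegative. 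Therefore $u^*$ coincides with the least element of FEA$(q,\mathcal{A})$ and is a sparse solution, which is what we want.

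I expect the only step needing care to be the last one, namely deducing $y=u^*$ from equality of the $p$th-power sums plus the componentwise domination $y\le u^*$; the rest is a direct appeal to Theorem \ref{least element theorem}, to the earlier sparse-solution theorem, and to the defining minimality of $u^*$. It is worth stressing why the $l_p$ argument cannot be bypassed: the domination $y\le u^*$ by itself only gives $\mbox{supp}(y)\subseteq\mbox{supp}(u^*)$, i.e. $\|y\|_0\le\|u^*\|_0$, which is the wrong direction for certifying that $u^*$ has minimum cardinality, so one genuinely needs the minimal-$l_p$-norm property to pin $u^*$ down to the least element.
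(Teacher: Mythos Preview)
Your proposal is correct and follows essentially the same route as the paper: both arguments identify $u^*$ with the unique least element $y$ of FEA$(q,\mathcal{A})$ by exploiting the minimal-$l_p$-norm property of $u^*$ together with the componentwise domination $y\le u^*$, and then invoke the earlier result that the least element is a sparse solution. The only cosmetic difference is that the paper phrases the key step as a contradiction (if $u^*\neq y$ then some coordinate gives a strict inequality $\|u^*\|_p^p>\|y\|_p^p$), whereas you argue directly that $\|y\|_p^p=\|u^*\|_p^p$ and then force componentwise equality.
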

\begin{proof}
    Let $y$ be the least element of FEA$(q,\mathcal{A}).$ Then by Theorem (\ref{minimal l_p norm theorem}), $y$ is a sparse solution of TCP$(q,\mathcal{A}).$ We show that $u^*$ is the least element of FEA$(q,\mathcal{A}).$ Suppose $u^* \neq y.$ Then $\exists$ atleast one index $l$ such that $u_l^* > y_l \geq 0.$ Therefore we get,
    \begin{align*}
        \|u^*\|_p^p & = \sum_{i\neq l} |u^*_i|^p +|u^*_l|^p\\
        & > \sum_{i\neq l} |y_i|^p + |y_l|^p\\
        & = \|y\|^p,
    \end{align*}
    which contradicts the fact that $u^*$ is a minimal $l_p$ norm solution of TCP$(q,\mathcal{A}).$ Hence $u^* = y,$ and since $u$ is a sparse solution of TCP$(q,\mathcal{A})$ so is $u^*.$
\end{proof}

\NI Let $S_p^*$ denote the set containing local minimizers of (\ref{l_p regularized equation}). Then we have the following theorems.

\begin{theorem}\label{Solution set S^* is bounded}
    The set $S_p^*$ containing the local minimizers of (\ref{l_p regularized equation}) is nonempty and bounded.
\end{theorem}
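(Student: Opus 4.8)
The plan is to dispose of nonemptiness at once and to reduce boundedness to a one--dimensional estimate along rays through the origin. For nonemptiness, by the preceding theorem the set $S_{t}$ of global minimizers of (\ref{l_p regularized equation}) is nonempty, and any global minimizer is in particular a local minimizer, so $\varnothing\neq S_{t}\subseteq S_{p}^{*}$.

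For boundedness, fix $u\in S_{p}^{*}$ with $u\neq 0$ and set $\psi(\lambda):=f(\lambda u)=g(\lambda)+t\lambda^{p}\|u\|_{p}^{p}$ for $\lambda>0$, where $g(\lambda):=\tfrac12\|\Phi_{FB}(\lambda u)\|^{2}$. Since $\phi_{FB}(\cdot,\cdot)^{2}$ is differentiable on $\mathbb{R}^{2}$ and $\lambda\mapsto(\lambda u_{j},\,q_{j}+\lambda^{m-1}(\mathcal{A}u^{m-1})_{j})$ is smooth, $g$ is differentiable on $(0,\infty)$; and $\lambda=1$ is a local minimum of $\psi$, so $\psi'(1)=0$, i.e.
\[
g'(1)=-\,t\,p\,\|u\|_{p}^{p}.
\]
Hence it suffices to find a constant $M>0$, independent of $u\in S_{p}^{*}$, with $g'(1)\geq -M$: then $\|u\|_{p}^{p}\leq M/(tp)$, so $|u_{i}|\leq (M/(tp))^{1/p}$ for every $i$, and $S_{p}^{*}$ is bounded. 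In the ideal case where $g$ is convex on $[0,1]$ one may take $M=g(0)=\tfrac12\|\Phi_{FB}(0)\|^{2}$, since then $g'(1)\geq g(1)-g(0)\geq -g(0)$; the point is that $g$ need not be convex, so the convexity has to be replaced by an estimate using the structure of $g$.

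That estimate is where the work lies. I would argue by contradiction: suppose $\{u_{k}\}\subseteq S_{p}^{*}$ with $\|u_{k}\|\to\infty$; passing to a subsequence, fix the support $S=\operatorname{supp}(u_{k})$ and the limit direction $u_{k}/\|u_{k}\|\to\bar u$, $\|\bar u\|=1$. Using $\mathcal{A}(\lambda u_{k})^{m-1}=\lambda^{m-1}\mathcal{A}u_{k}^{m-1}$, the bounds $\partial_{a}\phi_{FB},\partial_{b}\phi_{FB}\in[-2,0]$, and the second--order condition $\psi''(1)\geq 0$ (which gives $g''(1)\geq tp(1-p)\|u_{k}\|_{p}^{p}>0$), one analyses the summands of $g(\lambda)=\tfrac12\sum_{j}\phi_{FB}(\lambda (u_{k})_{j},\,q_{j}+\lambda^{m-1}(\mathcal{A}u_{k}^{m-1})_{j})^{2}$ according to the sign of $(\mathcal{A}\bar u^{m-1})_{j}$: where it is negative the summand and its radial derivative at $\lambda=1$ grow polynomially in $\|u_{k}\|$ with a sign that, balanced against the first--order identity above (whose right side is only of order $\|u_{k}\|_{\infty}^{p}$) and $g''(1)>0$, is inconsistent for large $k$; where it is nonnegative the summand's contribution to $g'(1)$ is bounded below in terms of $g(0)$. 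The hard part will be the degenerate case $(\mathcal{A}\bar u^{m-1})_{j}=0$ for all $j\in S$: the leading monomial then contributes nothing and one must pass to the next--order part of $\Phi_{FB}(\lambda u_{k})$ --- the terms from $\lambda u_{k}$ and the constant $q$ --- to see that $\psi$ is still eventually strictly increasing, hence cannot be stationary at $\lambda=1$ once $\|u_{k}\|$ is large; obtaining this lower--order bound \emph{uniformly} over $k$ and over the finitely many supports is the delicate step, and with it in hand the contradiction closes and $S_{p}^{*}$ is bounded.
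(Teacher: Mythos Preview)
Your nonemptiness argument is exactly the paper's: $f$ is continuous, bounded below, and coercive (via the term $t\|u\|_{p}^{p}$), hence attains a global minimum, and any global minimizer lies in $S_{p}^{*}$.

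For boundedness you diverge sharply from the paper --- and in fact the paper's proof simply does not address boundedness. After noting coercivity it concludes only that $S_{p}^{*}$ is nonempty and stops; the word ``bounded'' in the statement is not substantiated. So there is nothing in the paper against which to compare your radial--derivative programme; you are attempting to prove strictly more than the authors did. (The very next theorem, which produces an explicit bound, needs the extra hypothesis that $\mathrm{SOL}(q,\mathcal{A})$ is bounded, which is some indication that an unconditional argument is not immediate.)

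As to the programme itself: the first--order identity $g'(1)=-tp\|u\|_{p}^{p}$ along the ray is correct and is a nice starting point, but the remainder has real gaps, which you already flag. Two concrete issues. First, your appeal to $\psi''(1)\geq 0$ needs $\Psi_{FB}$ to be twice differentiable at $u$, and Lemma~\ref{facchini lemma} only guarantees $C^{1}$; at indices where $(u_{i},(\mathcal{A}u^{m-1}+q)_{i})=(0,0)$ the squared Fischer--Burmeister term is not $C^{2}$, so the second--order condition is not available without further work. Second, the sign--by--sign case analysis on $(\mathcal{A}\bar u^{m-1})_{j}$ is only heuristic, and in the degenerate case you single out --- all these quantities vanishing --- you yourself note that obtaining the needed lower--order estimate \emph{uniformly} over $k$ is the delicate step; as written it is not carried out. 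In short, your boundedness argument is a plausible sketch rather than a proof, while the paper offers no argument for boundedness at all.
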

\begin{proof}
Note that for any fixed $t>0,$ it can be shown that the objective function
$f$ of (\ref{l_p regularized equation}) is coercive which refers to the property that $f (u)\to +\infty$ as $\|u\|\to \infty.$
Since the function $f$ is continuous and bounded below so it attains its minimum. This implies the nonemptyness of the set $S_p^*.$
\end{proof}

\begin{theorem}
    Let $\mathcal{A}\in T_{m,n}$ and the set SOL$(q,\mathcal{A})$ is bounded with an upper bound $B>0.$ Then the set $S_p^*$ is bounded with an upper bound $\Bar{B}= \left(n^{(\frac{1}{p} - \frac{1}{2})}B\right).$
\end{theorem}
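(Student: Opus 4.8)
The plan is to dominate the $l_p$-``norm'' of every local minimizer by the $l_p$-``norm'' of a minimal $l_p$-norm solution of TCP$(q,\mathcal{A})$, and then to pass to the Euclidean norm via the elementary inequality
\[
\|v\|_p\le n^{\frac1p-\frac12}\,\|v\|\qquad(v\in\mathbb{R}^n),
\]
which holds because $0<p<1<2$. Since SOL$(q,\mathcal{A})$ is bounded by $B>0$ it is, in particular, nonempty, and being closed it is compact; hence the continuous map $\bar u\mapsto\|\bar u\|_p^p$ attains its minimum on SOL$(q,\mathcal{A})$ at some $\bar u^*$, a minimal $l_p$-norm solution. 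As $\bar u^*\in$ SOL$(q,\mathcal{A})$ we have $\|\bar u^*\|\le B$, so $\|\bar u^*\|_p\le n^{\frac1p-\frac12}\|\bar u^*\|\le n^{\frac1p-\frac12}B=\bar B$.

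Next I would fix an arbitrary $u\in S_p^*$ and compare the objective of (\ref{l_p regularized equation}) at $u$ with its value at $\bar u^*$, exactly as in (\ref{first for first theorem})--(\ref{2nd first theorem}): since $\Phi_{FB}(\bar u^*)=0$ we have $f(\bar u^*)=t\|\bar u^*\|_p^p$, and since the term $\tfrac12\|\Phi_{FB}(u)\|^2$ is nonnegative,
\[
t\,\|u\|_p^p\ \le\ \tfrac12\|\Phi_{FB}(u)\|^2+t\,\|u\|_p^p\ =\ f(u)\ \le\ f(\bar u^*)\ =\ t\,\|\bar u^*\|_p^p .
\]
Hence $\|u\|_p\le\|\bar u^*\|_p\le\bar B$, and therefore $\|u\|\le\|u\|_p\le\bar B$ because $p<2$. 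As $u$ was an arbitrary element of $S_p^*$ and $S_p^*\neq\emptyset$ by Theorem~\ref{Solution set S^* is bounded}, this is the asserted bound.

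\textbf{Where the work is.} The inequality $f(u)\le f(\bar u^*)$ above is immediate when $u$ is a \emph{global} minimizer of (\ref{l_p regularized equation}) -- that is precisely how (\ref{2nd first theorem}) was obtained in the proof of Theorem~\ref{minimal l_p norm theorem} -- but $S_p^*$ consists of \emph{all} local minimizers, and a local minimizer could a priori have objective value larger than $f(\bar u^*)$. The point that requires care is therefore to exclude this: one expects to do so using the coercivity of $f$ together with the first-order optimality conditions for (\ref{l_p regularized equation}) and the lower bound on the nonzero entries of local minimizers, so that no local minimizer lies outside the ball of radius $\bar B$. Granting that, the rest is the routine norm comparison above.
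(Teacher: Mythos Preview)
Your route is the same as the paper's: both use the norm comparison $\|v\|_2\le\|v\|_p\le n^{\frac1p-\frac12}\|v\|_2$ together with the observation that for any $u\in\mathrm{SOL}(q,\mathcal{A})$ one has $f(u)=t\|u\|_p^p\le t\bar B^p$, and then try to conclude that no element of $S_p^*$ can have Euclidean norm exceeding $\bar B$. The paper argues by contradiction: it takes $y\in S_p^*$ with $\|y\|\ge\bar B$, computes $f(y)\ge t\|y\|_p^p\ge t\|y\|_2^p>t\bar B^p$, and declares a contradiction with ``$y$ is a local minimizer.''

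The gap you flag is real, and the paper does not close it either. When the paper selects the offending $y\in S_p^*$ it simultaneously writes ``$f(y)=\min_{u\in\mathbb{R}^n}f(u)$'', i.e.\ it silently treats $y$ as a \emph{global} minimizer; only under that reading does $f(y)>t\bar B^p\ge f(u)$ for $u\in\mathrm{SOL}(q,\mathcal{A})$ yield a contradiction. For a merely local minimizer there is nothing preventing $f(y)>f(u)$, so as written the paper's argument (like yours before your ``where the work is'' paragraph) establishes the bound $\bar B$ only for global minimizers of (\ref{l_p regularized equation}), not for all of $S_p^*$. The additional ingredients you suggest (first-order conditions, the lower bound on nonzero entries) are not invoked in the paper's proof, so your honest identification of the gap is in fact sharper than the published argument.
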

\begin{proof}
    Let the SOL$(q,\mathcal{A})$ be bounded and $B>0$ be an upper bound of SOL$(q,\mathcal{A}),$ i.e. $\|u\| \leq B,\; \forall \; u\in$ SOL$(q,\mathcal{A}).$ For $p< 1< 2$ we have 
    \begin{equation*}
        \|u\|_2 \leq \|u\|_p \leq n^{(\frac{1}{p} - \frac{1}{2})} \|u\|_2.
    \end{equation*}
    Then for any $u\in$ SOL$(q,\mathcal{A})$ we have 
    \begin{equation*}
        \frac{1}{2} \|\Phi_{FB}(u)\|^2 = t \|u\|_p^p \leq t \Bar{B}^p.
    \end{equation*}
    Let $\Bar{B}$ be not an upper bound of $S_p^*.$ Then  $\exists \; y \in S_p^*$ such that $f(y)=\min_{u\in \mathbb{R}^n}f(u)$ and $\|y\| \geq \Bar{B}.$ Now
    \begin{align*}
        f(y) & = \frac{1}{2} \|\Phi_{FB}(u)\|^2 + t \|y\|_p^p\\
        & \geq  t \|y\|_p^p\\
        & > t \Bar{B}^p.\\
    \end{align*}
    This contradicts the fact that $y$ is a local minimizer of $(\ref{l_p regularized equation}).$
\end{proof}

Let $\Tilde{q} = -\min \{0, q\}$ where the minimum is taken componentwise. Then relating to the parameter $k,$ we have the following theorem.

\begin{theorem}
    Let $u^*$ be a local minimizer of (\ref{l_p regularized equation}) that satisfies $f(u^*) \leq f(u_0)$ for any given arbitrary initial point $u_0$. If $t \leq \frac{2 \|\Tilde{q}\|^2}{\|\Bar{u}\|_p^p}$ for some nonzero vector $\Bar{u}\in$ SOL$(q,\mathcal{A})$ then $0$ is not a global minimizer of (\ref{l_p regularized equation}).
\end{theorem}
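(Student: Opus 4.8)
The plan is to compare the objective value of $(\ref{l_p regularized equation})$ at the origin with its value at the prescribed nonzero solution $\Bar{u}\in$ SOL$(q,\mathcal{A})$, and to show that, under the stated bound on $t$, the value at $\Bar{u}$ does not exceed the value at $0$; since $\Bar{u}\neq 0$, this prevents $0$ from being a global minimizer of $f$. So the proof is essentially two direct evaluations of $f$ plus one comparison.

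First I would compute $f(0)$. Since $\mathcal{A}0^{m-1}=0$, the $i$th entry of $\Phi_{FB}(0)$ is, by $(\ref{FB function})$ and $(\ref{FB vector function})$, $\phi_{FB}(0,q_i)=\sqrt{q_i^2}-q_i=|q_i|-q_i=2\max\{0,-q_i\}=2\Tilde{q}_i$, using the definition of $\Tilde{q}$. Hence $\|\Phi_{FB}(0)\|^2=4\|\Tilde{q}\|^2$, and because $\|0\|_p^p=0$ we obtain $f(0)=\tfrac12\|\Phi_{FB}(0)\|^2+t\|0\|_p^p=2\|\Tilde{q}\|^2$. I would also record here that $t>0$ together with $t\le 2\|\Tilde{q}\|^2/\|\Bar{u}\|_p^p$ forces $\Tilde{q}\neq 0$ (so $f(0)>0$) and that $\|\Bar{u}\|_p^p>0$ since $\Bar{u}\neq 0$, which is what makes the hypothesis meaningful.

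Next I would compute $f(\Bar{u})$. Because $\Bar{u}\in$ SOL$(q,\mathcal{A})$, the Fischer-Burmeister characterisation stated just after $(\ref{FB vector function})$ gives $\Phi_{FB}(\Bar{u})=0$, so $f(\Bar{u})=t\|\Bar{u}\|_p^p$. Applying the hypothesis $t\le 2\|\Tilde{q}\|^2/\|\Bar{u}\|_p^p$ then yields $f(\Bar{u})=t\|\Bar{u}\|_p^p\le 2\|\Tilde{q}\|^2=f(0)$. Finally, invoking that the local minimizer $u^*$ satisfies $f(u^*)\le f(u_0)$ for the arbitrary initial point $u_0$, I would take $u_0=\Bar{u}$ to get $f(u^*)\le f(\Bar{u})\le f(0)$ with $u^*$ (and $\Bar{u}$) distinct-in-value behaviour witnessing that the minimum of $f$ over $\mathbb{R}^n$ is not attained only at $0$, i.e. $0$ is not a global minimizer of $(\ref{l_p regularized equation})$.

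The one delicate point I anticipate is the borderline case $f(\Bar{u})=f(0)$: the hypothesis only supplies the nonstrict inequality $f(\Bar{u})\le f(0)$, so to conclude strictly that $0$ is not a global minimizer one should either tighten the assumption to a strict inequality $t<2\|\Tilde{q}\|^2/\|\Bar{u}\|_p^p$, or supply an independent argument that $0$ cannot be optimal for $f$ whenever $\Tilde{q}\neq 0$ (since then $0\notin$ SOL$(q,\mathcal{A})$ and $f(0)=2\|\Tilde{q}\|^2>0$ is the pure residual value, which a feasible relaxation of the complementarity system can undercut for $t$ small). Apart from this boundary bookkeeping, every step is a routine substitution using the identities $\Phi_{FB}(0)_i=2\Tilde{q}_i$ and $\Phi_{FB}(\Bar{u})=0$.
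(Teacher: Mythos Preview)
Your computations of $f(0)=2\|\Tilde{q}\|^2$ and $f(\Bar{u})=t\|\Bar{u}\|_p^p$ are exactly what the paper does, and the comparison $f(\Bar{u})\le f(0)$ under the hypothesis on $t$ is the same. So the overall strategy matches.

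The gap you yourself flag---the borderline case $f(\Bar{u})=f(0)$---is real, and your attempt to close it via the hypothesis on $u^*$ does not work: taking $u_0=\Bar{u}$ only gives $f(u^*)\le f(\Bar{u})\le f(0)$, which is still a nonstrict inequality and does not rule out $0$ being a global minimizer. (The statement's hypothesis about $u^*$ and $u_0$ is in fact not used in the paper's argument at all.)

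The paper closes the gap by a different, local argument at $\Bar{u}$ rather than at $0$: since $\Phi_{FB}(\Bar{u})=0$, Lemma~\ref{facchini lemma} gives $\nabla\Psi_{FB}(\Bar{u})=0$, while for $\Bar{u}\neq 0$ the regularizer $t\|u\|_p^p$ has a nonzero first-order variation in the direction of any nonzero coordinate of $\Bar{u}$. Hence $\Bar{u}$ is not a stationary point of $f$, so there exists $\Tilde{u}$ in a neighbourhood of $\Bar{u}$ with $f(\Tilde{u})<f(\Bar{u})$. Combining this with $f(\Bar{u})\le f(0)$ yields the strict inequality $f(\Tilde{u})<f(0)$, which is what actually forces $0$ not to be a global minimizer. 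This is the missing idea in your write-up; once you insert it, the rest of your argument is fine and essentially identical to the paper's.
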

\begin{proof}
Observe that $f(0) = \frac{1}{2} \|\Phi_{FB}(0)\|^2 = 2 \|\Tilde{q}\|^2.$ Also since $\Bar{u}\in$ SOL$(q,\mathcal{A})$, which is nonzero.
\begin{equation}
    f(\Bar{u})= \frac{1}{2}\|\Phi_{FB}(\Bar{u})\|^2 + t \|\Bar{u}\|_p^p =t \|\Bar{u}\|_p^p >0.
\end{equation}    
Thus by the assumption,
\begin{equation}
    f(0)= 2 \|\Tilde{q}\|^2 \geq t \|\Bar{u}\|_p^p =f(\Bar{u}) >0.
\end{equation}
Clearly, $\Bar{u}$ is not a stationary point of (\ref{l_p regularized equation}) since $\Phi_{FB}(\Bar{u})=0.$ Therefore $\exists\; \Tilde{u}$ in a neighborhood of $\Bar{u}$ for which $f (\Bar{u})> f(\Tilde{u}).$ Hence $0$ fails to be a global minimizer of (\ref{l_p regularized equation}).
\end{proof}

\NI Now we consider the $l_p$ regularized model (\ref{l_p regularized equation}) to approximate the sparse solution of TCP in the case of a semi-symmetric tensor. If $\mathcal{A}$ is a semi-symmetric tensor then $\nabla \mathcal{A} u^{m-1} = (m-1) \mathcal{A} u^{m-2}.$ Let define a function $\Psi_{FB} : \mathbb{R}^n \to \mathbb{R}_+$ by $\Psi_{FB}(u)=\frac{1}{2}\|\Phi_{FB}(u)\|^2.$

\begin{lemma}\cite{facchinei2007finite}\label{facchini lemma}
The function $\Psi_{FB}(u)$ is continuously differentiable everywhere and the gradient vector of $\Psi_{FB}(u)$ is given by
\begin{equation}\label{equation from facchini-pang}
    \nabla \Psi_{FB}(u) = \left[D_v(u) + \nabla (\mathcal{A}u^{m-1}) D_z(u) \right] \Phi_{FB}(u)
\end{equation}
where $D_v(u)= diag(v_1(u),...,v_n(u))$ and $D_z(u)= diag(z_1(u),...,z_n(u))$ are two diagonal matrices with diagonal elements given by\\
\[v_i(u)=\left\{\begin{array}{ll}
    \frac{u_i}{\sqrt{u_i^2 + (\mathcal{A}u^{m-1} + q)_i^2}} -1 & \mbox{ if } (u_i, (\mathcal{A}u^{m-1} + q)_i)\neq 0, \\
    \rho -1, & \mbox{ otherwise },
\end{array} \right.\]
\[z_i(u)=\left\{\begin{array}{ll}
    \frac{(\mathcal{A}u^{m-1} + q)_i}{\sqrt{u_i^2 + (\mathcal{A}u^{m-1} + q)_i^2}} -1 & \mbox{ if } (u_i, (\mathcal{A}u^{m-1} + q)_i)\neq 0, \\
    \xi -1, & \mbox{ otherwise },
\end{array} \right.\]
where $\|(\rho_i, \xi_i)\| \leq 1.$
\end{lemma}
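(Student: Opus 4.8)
The plan is to reduce the statement to the scalar Fischer--Burmeister function $\phi_{FB}(a,b)=\sqrt{a^2+b^2}-(a+b)$ and its square, and then differentiate the composition with the polynomial map $u\mapsto\bigl(u,\,\mathcal{A}u^{m-1}+q\bigr)$ by the chain rule. First I would record the elementary facts about $\phi_{FB}$: on $\mathbb{R}^2\setminus\{(0,0)\}$ it is $C^\infty$ with
\[
\nabla\phi_{FB}(a,b)=\left(\frac{a}{\sqrt{a^2+b^2}}-1,\ \frac{b}{\sqrt{a^2+b^2}}-1\right),
\]
both partial derivatives lying in $[-2,0]$, so $\nabla\phi_{FB}$ is bounded off the origin; moreover $\phi_{FB}$ is globally Lipschitz and $\phi_{FB}(0,0)=0$, hence $|\phi_{FB}(a,b)|\le L\,\|(a,b)\|$ near the origin.

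The key step is to show that $\psi_{FB}(a,b):=\tfrac12\,\phi_{FB}(a,b)^2$ is continuously differentiable on all of $\mathbb{R}^2$, even though $\phi_{FB}$ is nonsmooth at the origin. Away from the origin the chain rule gives $\nabla\psi_{FB}(a,b)=\phi_{FB}(a,b)\,\nabla\phi_{FB}(a,b)$. At the origin, the Lipschitz bound yields $\psi_{FB}(a,b)=O(\|(a,b)\|^2)$, so $\psi_{FB}$ is differentiable there with $\nabla\psi_{FB}(0,0)=0$; and because $\nabla\phi_{FB}$ remains bounded while $\phi_{FB}(a,b)\to0$, the product $\phi_{FB}(a,b)\nabla\phi_{FB}(a,b)\to 0=\nabla\psi_{FB}(0,0)$, giving continuity of the gradient at the origin. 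Hence, uniformly on $\mathbb{R}^2$,
\[
\nabla\psi_{FB}(a,b)=\phi_{FB}(a,b)\,\bigl(v(a,b),\,z(a,b)\bigr),
\]
where $(v,z)=\nabla\phi_{FB}(a,b)$ off the origin and $(v,z)=(\rho-1,\xi-1)$ with $\|(\rho,\xi)\|\le1$ at the origin (the latter choice being immaterial since $\phi_{FB}(0,0)=0$ annihilates the term). This is exactly the pointwise content of the formulas for $v_i(u)$ and $z_i(u)$ in the lemma.

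Finally I would assemble the vector statement. Write $\Psi_{FB}(u)=\sum_{i=1}^{n}\psi_{FB}\bigl(u_i,\,(\mathcal{A}u^{m-1}+q)_i\bigr)$. The map $h(u)=\mathcal{A}u^{m-1}+q$ is a polynomial in $u$, hence $C^\infty$, with gradient matrix $\nabla(\mathcal{A}u^{m-1})$ (columns $\nabla h_i$). Differentiating the $i$-th summand: its first slot $u_i$ contributes $v_i(u)$ times the $i$-th component of $\Phi_{FB}(u)$ to coordinate $i$ of $\nabla\Psi_{FB}(u)$, and its second slot contributes $z_i(u)$ times the $i$-th component of $\Phi_{FB}(u)$, propagated through $\nabla(\mathcal{A}u^{m-1})$, to the remaining coordinates. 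Stacking the first family of contributions gives $D_v(u)\Phi_{FB}(u)$ and the second gives $\nabla(\mathcal{A}u^{m-1})\,D_z(u)\,\Phi_{FB}(u)$, so that
\[
\nabla\Psi_{FB}(u)=\bigl[D_v(u)+\nabla(\mathcal{A}u^{m-1})\,D_z(u)\bigr]\Phi_{FB}(u),
\]
which is the asserted identity; continuity of $\nabla\Psi_{FB}$ follows since it is the composition of the continuous $\nabla\psi_{FB}$ with the smooth $h$. The only genuine obstacle is the middle step---verifying that squaring restores $C^1$ regularity at the origin---and this rests entirely on the global Lipschitz continuity of $\phi_{FB}$ together with the boundedness of $\nabla\phi_{FB}$ away from the origin; once that is in hand the remainder is routine chain-rule bookkeeping. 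Alternatively, one may simply invoke \cite{facchinei2007finite}, where this computation is carried out in full.
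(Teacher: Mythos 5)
Your proof is correct, but note that the paper itself offers no proof of this lemma: it is imported verbatim (with the matrix $\nabla F$ replaced by $\nabla(\mathcal{A}u^{m-1})$) from the cited reference \cite{facchinei2007finite}, and your argument is essentially the standard one carried out there --- establish that $\psi_{FB}=\tfrac12\phi_{FB}^2$ is $C^1$ at the origin via the Lipschitz bound and boundedness of $\nabla\phi_{FB}$, then push through the chain rule with the smooth map $u\mapsto\mathcal{A}u^{m-1}+q$. The only point worth making explicit in your final assembly is the convention that $\nabla(\mathcal{A}u^{m-1})$ denotes the \emph{transposed} Jacobian of $u\mapsto\mathcal{A}u^{m-1}$ (as in Facchinei--Pang), which is exactly what your ``columns $\nabla h_i$'' bookkeeping produces, and that the arbitrary values $(\rho-1,\xi-1)$ at degenerate indices are harmless precisely because the corresponding component of $\Phi_{FB}(u)$ vanishes there --- a point you already note.
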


\begin{remk}
    In case of semi-symmetric tensor the matrix $\nabla (\mathcal{A}u^{m-1})$ in (\ref{equation from facchini-pang}) can be replaced by $(m-1) \mathcal{A} u^{m-2}.$ Then the gradient vector of $\Psi_{FB}(u)$ can be obtained by
\begin{equation}\label{equation from facchini-pang 2}
    \nabla \Psi_{FB}(u) = \left[D_v(u) + (m-1) \mathcal{A} u^{m-2} D_z(u) \right] \Phi_{FB}(u),
\end{equation}
\end{remk}

Now we provide a lower bound $L$ for any local minimizer of (\ref{l_p regularized equation}.)

\begin{theorem}
    Let $u^*$ be any local minimizer of (\ref{l_p regularized equation}) satisfying $f(u^*) \leq f(u_0)$ for an initial point $u_0.$ Let $\mathcal{A}$ be symmetric tensor and let $\mu$ be the upper bound of $S_p^*.$ Let,
    $L= \left( \frac{t p}{2\sqrt{2}\left(1 +(m-1)\|\mathcal{A}\|\mu^{(m-2)}\right)} \right)^{\frac{1}{1-p}}.$ Then for any $i\in [n]$ we have $u^*_i \in (-L,L) \implies u_i^*=0.$ Also, the number of nonzero entries in $u^*$ is bounded with $\frac{f(u_0)}{t L^p}.$
\end{theorem}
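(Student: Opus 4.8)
The plan is to extract a first-order optimality condition at the local minimizer $u^*$ and then control the gradient of the smooth part $\Psi_{FB}$ componentwise. Fix $i\in[n]$ with $u_i^*\neq0$. Since $u_i^*\neq0$, the scalar map $s\mapsto|s|^p$ is differentiable at $s=u_i^*$ (for $p\in(0,1)$ the only point of non-differentiability of $|\cdot|^p$ is the origin), and $\Psi_{FB}$ is $C^1$ everywhere by Lemma \ref{facchini lemma}; hence $s\mapsto f\bigl(u^*+(s-u_i^*)e_i\bigr)$ is differentiable at $s=u_i^*$, and because $u^*$ locally minimizes this one-dimensional restriction its derivative vanishes there, giving
\[
(\nabla\Psi_{FB}(u^*))_i + t\,p\,|u_i^*|^{\,p-1}\,\mathrm{sign}(u_i^*)=0 ,
\]
so that $t\,p\,|u_i^*|^{\,p-1}=\bigl|(\nabla\Psi_{FB}(u^*))_i\bigr|\le\|\nabla\Psi_{FB}(u^*)\|$.

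Next I would bound $\|\nabla\Psi_{FB}(u^*)\|$ uniformly. As $\mathcal A$ is symmetric it is in particular semi-symmetric, so the Remark following Lemma \ref{facchini lemma} applies and
\[
\nabla\Psi_{FB}(u^*)=\bigl[D_v(u^*)+(m-1)\,\mathcal A (u^*)^{m-2}D_z(u^*)\bigr]\Phi_{FB}(u^*).
\]
Here the diagonal matrices $D_v(u^*),D_z(u^*)$ have spectral norm bounded by an absolute constant, since each diagonal entry lies in a fixed bounded interval by the explicit formulas of Lemma \ref{facchini lemma} together with $\|(\rho_i,\xi_i)\|\le1$; the tensor factor satisfies $\|\mathcal A (u^*)^{m-2}\|\le\|\mathcal A\|\,\|u^*\|^{m-2}\le\|\mathcal A\|\,\mu^{\,m-2}$ because $u^*\in S_p^*$ and $\mu$ is an upper bound of $S_p^*$; and $\|\Phi_{FB}(u^*)\|$ is bounded via $\tfrac12\|\Phi_{FB}(u^*)\|^2\le f(u^*)\le f(u_0)$. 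Assembling these estimates yields $\|\nabla\Psi_{FB}(u^*)\|\le 2\sqrt2\,\bigl(1+(m-1)\|\mathcal A\|\mu^{\,m-2}\bigr)$, so that $|u_i^*|^{\,p-1}\le \dfrac{2\sqrt2\,(1+(m-1)\|\mathcal A\|\mu^{\,m-2})}{t\,p}$; since $p-1<0$, raising both sides to the power $1/(p-1)$ reverses the inequality and gives $|u_i^*|\ge L$. Equivalently, $u_i^*\in(-L,L)$ forces $u_i^*=0$.

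For the cardinality bound, observe that $f(u^*)=\Psi_{FB}(u^*)+t\|u^*\|_p^p\ge t\|u^*\|_p^p$, hence $\|u^*\|_p^p\le f(u^*)/t\le f(u_0)/t$. On the other hand, by the first part each nonzero coordinate of $u^*$ has modulus at least $L$, so $\|u^*\|_p^p=\sum_{i:\,u_i^*\neq0}|u_i^*|^p\ge\|u^*\|_0\,L^p$. Chaining the two inequalities gives $\|u^*\|_0\le f(u_0)/(t\,L^p)$, which is the second claim.

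The stationarity condition and the final counting argument are routine; the delicate step is the uniform estimate of $\|\nabla\Psi_{FB}(u^*)\|$. There the symmetry of $\mathcal A$ is exactly what allows replacing $\nabla(\mathcal A u^{m-1})$ by $(m-1)\mathcal A u^{m-2}$, the inclusion $u^*\in S_p^*$ (with $\|u^*\|\le\mu$) controls the tensor term, and the descent property $f(u^*)\le f(u_0)$ controls $\|\Phi_{FB}(u^*)\|$; matching the constant to the precise form $2\sqrt2\,(1+(m-1)\|\mathcal A\|\mu^{\,m-2})$ appearing in $L$ is the one place that needs careful bookkeeping of the Fischer--Burmeister Jacobian data.
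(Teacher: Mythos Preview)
Your argument follows essentially the same route as the paper's proof: obtain a componentwise first-order optimality condition at nonzero coordinates, bound $|(\nabla\Psi_{FB}(u^*))_i|$ via the Fischer--Burmeister Jacobian formula and the inequality $\|\Phi_{FB}(u^*)\|\le\sqrt{2f(u_0)}$, invert using $p-1<0$, and finish with the counting estimate $\|u^*\|_p^p\ge\|u^*\|_0\,L^p$. The only methodological difference is in how you reach the stationarity condition: you restrict $f$ to the line $s\mapsto u^*+(s-u_i^*)e_i$ and use one-variable differentiability at $s=u_i^*\neq0$, whereas the paper restricts to the full support $I=\{i:u_i^*\neq0\}$, defines an auxiliary function $h:\mathbb{R}^{|I|}\to\mathbb{R}$, and argues that $u_I^*$ is a local minimizer of $h$. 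Your one-dimensional argument is shorter and avoids the auxiliary construction; both yield the same identity $(\nabla\Psi_{FB}(u^*))_i+t\,p\,|u_i^*|^{p-1}\mathrm{sign}(u_i^*)=0$.

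There is one bookkeeping slip worth flagging. You correctly record $\tfrac12\|\Phi_{FB}(u^*)\|^2\le f(u_0)$, but then write the assembled bound as $\|\nabla\Psi_{FB}(u^*)\|\le 2\sqrt2\,(1+(m-1)\|\mathcal A\|\mu^{m-2})$, silently dropping the factor $\sqrt{f(u_0)}$. The estimate that actually follows from your ingredients (and from the paper's computation) is
\[
t\,p\,|u_i^*|^{p-1}\le 2\sqrt2\,\bigl(1+(m-1)\|\mathcal A\|\mu^{m-2}\bigr)\sqrt{f(u_0)},
\]
leading to the lower bound $L=\Bigl(\dfrac{t\,p}{2\sqrt2\,(1+(m-1)\|\mathcal A\|\mu^{m-2})\sqrt{f(u_0)}}\Bigr)^{1/(1-p)}$. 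The paper's proof derives exactly this; the absence of $\sqrt{f(u_0)}$ in the displayed $L$ of the theorem statement is a typo there, not something your derivation can match.
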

\begin{proof}
    Since $u^*\in S_p^*,$ $\exists\; \delta>0$ and a neighbourhood $N(u^*)=\{ u: \|u-u^*\| \leq \delta \}$ such that
    \begin{equation}\label{lower bound 1st equation}
        f(u^*) \leq f(u) \mbox{ for any } u\in N(u^*).
    \end{equation}
    Let $[n] = I \cup \Tilde{I}$ where $I=\{i: u^*_i \neq 0\}$ and $\Tilde{I}=\{i: u_i*=0 \}.$
    Clearly, $\|u^*\|_0 =|I|,$ which is the cardinality of $I.$ Let us define a function $h: \mathbb{R}^{|I|} \to \mathbb{R}$ by
    \begin{equation}
        h(w) = \frac{1}{2}\left( \sum_{j\in I} \Phi^2_{FB}(w_j, R_{jI}(\mathcal{A}) w^{m-1} + q_j) + \sum_{j\in I} \Phi^2_{FB}(0, R_{jI}(\mathcal{A})w^{m-1} + q_j) \right) +t \|w\|_p^p,
    \end{equation}
    where
    \begin{equation*}
        R_{jI}(\mathcal{A}) w^{m-1}= \sum_{j_2, ..., j_m \in I } a_{j j_2 ... j_m} w_{j_2} \cdots w_{j_m} \mbox{ and }
    \end{equation*}
    \begin{equation*}
         R_{j\Bar{I}}(\mathcal{A}) w^{m-1}= \sum_{j_2, ..., j_m \notin I} a_{j j_2 ... j_m} w_{j_2} \cdots w_{j_m}.
    \end{equation*}
     For any $w\in \mathbb{R}^{|I|},$ we define a vector $u_w$ by letting $(u_w)_I = w$ and $(u_w)_{\Bar{I}}= 0.$ Then it is trivial that $h(w) = f (u_w).$
    Now our claim is  that $w^* = u^*_I$ is a global minimizer of $h(w)$ in the region $\Omega_1= \{ w: \|w- u^*_I\| \leq \delta \}.$ Otherwise there exists $\Tilde{w} \in \Omega_1$ such that $h(\Tilde{w}) < h(w^*).$ We have $u_{\Tilde{w}}\in N(u^*)$ since,
    \[ \|u_{\Tilde{w}} - u^*\| = \|\Tilde{w} - w^*\| = \|\Tilde{w} - u^*_I\| \leq \delta. \]
    By observing that $u_{w^*}=u^*,$ we get$f(u_{\Tilde{w}} ) = h(\Tilde{w} ) < h(w^*)= f(u^*)$ which contradicts (\ref{lower bound 1st equation}). Hence $w^* = u^*_I$ is a local minimizer of 
    \begin{equation}\label{lower bound 2nd equation}
        \min_{w\in \mathbb{R}^{|I|}} h(w)
    \end{equation}
    where $h$ is a continuously differentiable function. Now using the first order necessary optimality condition for (\ref{lower bound 2nd equation}) we have $\nabla h(w^*) = 0.$ Which gives
    \begin{equation}\label{lower bound 3rd equation}
        (\nabla \Psi_{FB}(u^*))_i + t p |u^*_i|^{p-1} sign(u_i^*) = 0,\; \forall \; i\in I.
    \end{equation}
    From the relation $ \frac{1}{2}\| \Phi_{FB}(u^*)\|^2 \leq f(u^*) \leq f(u_0),$ we have 
    \begin{equation}\label{lower bound 4th equation}
         \|\Phi_{FB}(u^*)\| \leq \sqrt{ 2 f(u_0)}.
    \end{equation}
     Now combining Lemma \ref{facchini lemma}, Equation \ref{lower bound 3rd equation}, Equation \ref{lower bound 4th equation} and the Cauchy–Schwarz inequality, we obtain for any $i\in I,$
     \begin{align}\label{lower bound helping equation}
         t p |u^*_i|^{p-1} & = |(\nabla \Psi_{FB} (u^*))_i| \notag \\
         & \leq \|v_i(u) + (m-1) \mathcal{A} u^{m-2} z_i(u) \|\cdot \|\Phi_{FB}(u^*)\| \notag \\
         & \leq 2 \sqrt{2} \left(1 + (m-1)\|\mathcal{A}\| \mu^{(m-2) }\right) \sqrt{f(u_0)}.
     \end{align}
    Note that $p-1 < 0.$ Now using (\ref{lower bound helping equation}), we conclude that
    \[ |u^*_i| \geq  \left( \frac{t p}{2\sqrt{2}(1 + (m-1)\|\mathcal{A}\| \mu^{(m-2)}) \sqrt{f(u_0)}} \right)^{\frac{1}{1-p}} =L, \;\forall \;i\in I. \]
    This means that all nonzero components of $u^*$ are not less than $L.$ Or we can say, for any $i\in [n], \; u^* \in (-L, L) \mbox{ implies } u^*_i=0.$

\NI Now we show the second part of the theorem. By what precedes, $|u^*_i| \geq L$ for any $i\in I.$ Thus 
\begin{equation}\label{lower bound 5th equation}
    \|u^*\|_p^p = \sum_{i\in I} |u^*_i|^p \geq |I| L^p = \|u^*\|_0 L^p
\end{equation}
Thus combining $ t \|u^*\|_p^p \leq \frac{1}{2}\| \Phi_{FB}(u^*)\|^2 + t \|u^*\|_p^p = f(u^*) \leq f(u_0),$ we obtain
\begin{equation}
    \|u^*\|_0 \leq \frac{f(u_0)}{t L^p}.
\end{equation}
\end{proof}

\begin{theorem}
    Let $\mathcal{A}$ be a semi-symmetric tensor. For the regularized model (\ref{l_p regularized equation}), $u^*$ is a local minimizer satisfying $f(u^*) \leq f(u_0)$ for an initial point $u_0.$ Denote,
\[ \gamma (k) = k^{p-1} \left( \frac{2\sqrt{2}(1 + (m-1)\|\mathcal{A}\| \mu^{(m-2)})}{p} \right)^p \left( \sqrt{f(u_0)} \right)^{2-p}, \; k\in [n]. \]
Then the following cases hold:\\
\NI (a) For $k\in [n]$ if $t \geq \gamma(k),$ then $\|u^*\|_0 <k.$\\
\NI (b) For $k=1,$ if $t \geq \gamma(1),$ then $u^* = 0$ is the unique minizer of (\ref{l_p regularized equation}).
\end{theorem}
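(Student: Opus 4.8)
The plan is to read off both parts directly from the two conclusions of the preceding theorem — the lower bound $L$ on the nonzero coordinates of any admissible local minimizer $u^*$, and the cardinality estimate $\|u^*\|_0\le f(u_0)/(t\,L^p)$ — and then to carry out the exponent bookkeeping that converts a threshold on $t$ into a threshold on $\|u^*\|_0$. So nothing new is needed beyond algebra on the quantities already produced.

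Concretely, writing $C=2\sqrt2\,\bigl(1+(m-1)\|\mathcal{A}\|\mu^{m-2}\bigr)$ so that $L=\bigl(tp/(C\sqrt{f(u_0)})\bigr)^{1/(1-p)}$, I would substitute this into $\|u^*\|_0\le f(u_0)/(t\,L^p)$ and collect the powers of $t$, of $f(u_0)$ and of $C/p$. Using $1+\tfrac{p}{1-p}=\tfrac1{1-p}$ and $1+\tfrac{p}{2(1-p)}=\tfrac{2-p}{2(1-p)}$, the estimate becomes
\[
\|u^*\|_0\ \le\ \Bigl(\tfrac{C}{p}\Bigr)^{\frac{p}{1-p}}\bigl(\sqrt{f(u_0)}\bigr)^{\frac{2-p}{1-p}}\,t^{-\frac1{1-p}} .
\]
The right-hand side is strictly decreasing in $t$, and a one-line check gives $\gamma(k)^{-1/(1-p)}=k\,(C/p)^{-p/(1-p)}\bigl(\sqrt{f(u_0)}\bigr)^{-(2-p)/(1-p)}$, i.e. the bound equals exactly $k$ at $t=\gamma(k)$. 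Hence $t\ge\gamma(k)$ forces $\|u^*\|_0\le k$, and to reach the strict inequality of (a) I would invoke the fact that the estimate $|u_i^*|\ge L$ from the previous theorem is actually strict at every nonzero coordinate whenever the descent is strict, $f(u^*)<f(u_0)$ (both the Cauchy--Schwarz step and $\|\Phi_{FB}(u^*)\|\le\sqrt{2f(u_0)}$ would have to be equalities otherwise); then $\|u^*\|_p^p>\|u^*\|_0L^p$ and $t\|u^*\|_p^p\le f(u^*)\le f(u_0)$ give $\|u^*\|_0<f(u_0)/(tL^p)\le k$, proving (a).

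For (b) I would apply (a) with $k=1$: under $t\ge\gamma(1)$ every admissible local minimizer satisfies $\|u^*\|_0<1$, hence $\|u^*\|_0=0$ and $u^*=0$. By the earlier theorem establishing that $S_p^*$ is nonempty and bounded — which rests on $f$ being coercive and continuous — the function $f$ attains its global minimum; that global minimizer is in particular a local minimizer with value at most $f(u_0)$, so the preceding sentence forces it to equal $0$. Since every global minimizer equals $0$, the point $0$ is the unique minimizer of (\ref{l_p regularized equation}).

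The only genuine obstacle is the exponent bookkeeping in the middle step: one must match the powers of $t$, $f(u_0)$ and $1+(m-1)\|\mathcal{A}\|\mu^{m-2}$ exactly against the stated $\gamma(k)$, and take a little care upgrading ``$\le k$'' to ``$<k$'' (which is unproblematic at any $t>\gamma(k)$, and at $t=\gamma(k)$ uses that $u^*$ is obtained by strict descent from $u_0$). Everything else is a direct appeal to the lower-bound theorem and to the existence of a global minimizer already recorded in the paper.
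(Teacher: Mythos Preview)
Your overall route is the same as the paper's: both arguments reduce to the single inequality
\[
t\,\|u^*\|_0\,L^p \;\le\; t\,\|u^*\|_p^p \;\le\; f(u^*)\;\le\; f(u_0),
\]
together with the exponent bookkeeping that turns $t\ge\gamma(k)$ into $f(u_0)/(tL^p)\le k$. Your algebra with $C$, $L$ and the exponents $\tfrac{p}{1-p},\ \tfrac{2-p}{1-p}$ is exactly what the paper does (the paper phrases it as a contradiction from $\|u^*\|_0\ge k$, you phrase it as a direct upper bound; these are the same computation).

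The one genuine gap is your handling of the \emph{strict} inequality in~(a) at the boundary value $t=\gamma(k)$. You write that there you ``use that $u^*$ is obtained by strict descent from $u_0$,'' i.e.\ $f(u^*)<f(u_0)$, but the hypothesis only gives $f(u^*)\le f(u_0)$, so this step is unjustified as stated. The paper obtains strictness differently and without that assumption: from the first--order condition $(\nabla\Psi_{FB}(u^*))_i+tp|u_i^*|^{p-1}\mathrm{sign}(u_i^*)=0$ on the support $I$, any nonzero $u^*$ forces $(\nabla\Psi_{FB}(u^*))_i\neq 0$ for some $i$, hence $\Phi_{FB}(u^*)\neq 0$ and therefore $f(u^*)>t\|u^*\|_p^p$ strictly. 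That single strict inequality, inserted at the start of the chain, yields $\|u^*\|_0<k$ for all $t\ge\gamma(k)$ in one stroke. In fact your own parenthetical already contains the fix: equality in $\|\Phi_{FB}(u^*)\|\le\sqrt{2f(u_0)}$ would force $t\|u^*\|_p^p=0$, i.e.\ $u^*=0$; so for $u^*\neq 0$ you get $|u_i^*|>L$ on $I$ without ever invoking strict descent. Replace the ``strict descent'' clause by this observation and your proof of~(a) is complete. Part~(b) is then identical to the paper's.
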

\begin{proof}
     By (\ref{lower bound 3rd equation}) we conclude for any $0\neq u^* \in S^*_p,$ we have $\Phi_{FB}(u^*)\neq 0.$ According to (\ref{lower bound 5th equation}) and the definition of $L,$ we have
    \begin{align*}
        f(u^*)> t \|u^*\|_p^p & \geq t \|u^*\|_0 L^p\\
        &\geq  t \|u^*\|_0 \left( \frac{t p}{2\sqrt{2}(1 + (m-1)\|\mathcal{A}\| \mu^{(m-2)}) \sqrt{f(u_0)}} \right)^{\frac{p}{1-p}}
    \end{align*}
    \NI(a) When $u^* = 0$ the statement obviously holds. We consider the case $u^* \neq 0.$ For $k\in [n],$ if $t \geq \gamma(k),$ let $\| u^* \| \geq k \geq 1 .$ Then we obtain,
    \begin{align*}
        f(u^*) & > t k \left( \frac{t p}{2\sqrt{2}(1 + (m-1)\|\mathcal{A}\| \mu^{(m-2)}) \sqrt{f(u_0)}} \right)^{\frac{p}{1-p}}\\
        & = t^{\frac{1}{1-p}} k \left( \frac{ p}{2\sqrt{2}(1 + (m-1)\|\mathcal{A}\| \mu^{(m-2)}) \sqrt{f(u_0)}} \right)^{\frac{p}{1-p}}\\
        & \geq \gamma (k)^{\frac{1}{1-p}} k \left( \frac{ p}{2\sqrt{2}(1 + (m-1)\|\mathcal{A}\| \mu^{(m-2)}) \sqrt{f(u_0)}} \right)^{\frac{p}{1-p}} \\
        & = f(u_0),
    \end{align*}
    which contradicts the fact that $f(u^*) \leq f(u_0).$ Hence and hence we obtain $\|u^*\|_0 < k.$\\
    \NI(b) Now for $k = 1,$ if $t \geq \gamma(1),$ then from the result (a) we are getting that $\|u^*\|_0 =0$ for an arbitrary $u^* \in s^*_p.$ This implies that $u^* = 0$ uniquely minimizes (\ref{l_p regularized equation}).
\end{proof}

\section{Conclusion}
In this article, for $\mathcal{A} \in Z$ tensor, we find a relationship between the least element of FEA$(q,\mathcal{A})$ and the sparse solution of TCP$(q,\mathcal{A})$. We consider an unrestricted $p$ norm regularised minimization problem. We show that the sparse solutions of the TCP are approximated by this minimization problem. We provide a lower bound of every nonzero entry of local minimizers. In our proposed regularised model, we also find a lower bound of the regularisation parameter to achive zero global minimizer.

\section{Acknowledgment}
The author R. Deb is thankful for financial support to the Council of Scientific $\&$ Industrial Research (CSIR), India, Junior Research Fellowship scheme.

\bibliographystyle{plain}
\bibliography{referencesall}

\end{document}